\newtheorem{definition}{Definition}[section]
\newtheorem{lemma}{Lemma}[section]
\newtheorem{theorem}{Theorem}[section]
\newtheorem{corollary}{Corollary}[section]
\newtheorem{problem}{Problem}[section]
\newtheorem{conjecture}{Conjecture}[section]
\begin{document}

\begin{center}

\large{\bf THE MULTISET DIMENSION OF GRAPHS}

\bigskip

\large{Rinovia Simanjuntak, Presli Siagian and Tom\'{a}\v{s} Vetr\'{i}k}

\end{center}

{\small
\noindent {\bf Abstract.}
We introduce a variation of metric dimension, called the multiset dimension. The representation multiset of a vertex $v$ with respect to $W$ (which is a subset of the vertex set of a graph $G$), $r_m (v|W)$, is defined as a multiset of distances between $v$ and the vertices in $W$. If $r_m (u |W) \neq r_m(v|W)$ for every pair of distinct vertices $u$ and $v$, then $W$ is called an m-resolving set of $G$. If $G$ has an m-resolving set, then the cardinality of a smallest m-resolving set is called the multiset dimension of $G$, denoted by $md(G)$. If $G$ does not contain an m-resolving set, we write $md(G) = \infty$.

In this paper we present basic results on the multiset dimension. We obtain some (sharp) bounds for multiset dimension of arbitrary graphs in term of its metric dimension, order, or diameter. We provide some necessary conditions for a graph to have finite multiset dimension, with an example of an infinite family of graphs where those necessary conditions are also sufficient. We also show that the multiset dimension of any graph other than a path is at least $3$ and finally we provide two families of graphs having the multiset dimension $3$.

\bigskip

\noindent {\bf Keywords:}
metric dimension, multiset dimension, distance.

\bigskip

\noindent {\bf Mathematics Subject Classification:} 05C35, 05C12.
}

\section{Introduction}

In July 2013, at Graph Master 2013, which was held at the University of Lleida in Spain, the first author presented a survey on metric dimension of graphs. The metric dimension was introduced separately by Slater \cite{S75} and Harary and Melter \cite{HM76}.

Let $G$ be a connected graph with the vertex set $V(G)$. The distance $d(u,v)$ between two vertices $u,v \in V(G)$ is the number of edges in a shortest path between them. A vertex $w$ resolves a pair of vertices $u,v$ if $d(u,w) \neq d(v,w)$. For an ordered set of $k$ vertices $W = \{ w_1, w_2, \dots , w_k \}$, the representation of distances of a vertex $v$ with respect to $W$ is the ordered $k$-tuple $$r(v|W) = ( d(v,w_1), d(v,w_2), \dots , d(v,w_z)).$$ A set of vertices $W \subset V(G)$ is a resolving set of $G$ if every two vertices of $G$ have distinct representations. A resolving set with minimum cardinality is called a metric basis and the number of vertices in a metric basis is called the metric dimension, denoted by $dim(G)$.

After Graph Master 2013, Charles Delorme, who was in the audience, send a suggestion to the first author to look at the multiset of distances, instead of looking at the vector of distances to some given set of vertices. Let us copy an excerpt of Charles' email.

\begin{quote}
\begin{small}
Instead of looking at the list of distances to some code in graph, we may look at the multiset of distances.

However, contrarily to the case of lists, where a sufficient large code allows the identification of all vertices, it may happen that no code provides identification. This is the case if the graph has too many symmetries.

For example, a complete graph with $n\ge 3$ vertices and a code with $m$ vertices (with $1\le m < n$) gives only two multisets, namely $\{1^n\}$ for vertices out of the code and $\{0,1^{n-1}\}$ for vertices in the code.

Some graphs however have a code such that the multiset of distances identifies the vertices. It is the case for cycles with $n\ge 6$ vertices.

Other example; Petersen graph. We recall that all graphs with less than $7$ vertices admit some non-trivial automorphism. Therefore, any code in Petersen graph with at most $6$ vertices contains two vertices with the same multiset of distances. On the other hand, if the code has $7$ or more vertices, some non-trivial automorphism of the whole graph preserves the complement of the code and the multisets in an orbit of this automorphism are the same.
\end{small}
\end{quote}

Charles fell ill at the end of 2013 and he was not in a good health condition until his passing away in 2015. This is one of the reasons why we did not pursue his idea further. Four years later, we reencountered Charles' email and felt that it is worthwhile to explore this new idea, since the notion is interesting and it has different properties than the original notion of metric dimension.

We start by formally defining the multiset dimension.

\begin{definition}
Let $G$ be a simple and connected graph with vertex set $V(G)$. Suppose that $W$ is a subset of $V(G)$ and $v$ is a vertex of $G$. The \emph{representation multiset of $v$ with respect to $W$}, $r_m (v|W)$, is defined as a multiset of distances between $v$ and the vertices in $W$. If $r_m (u |W) \neq r_m(v|W)$ for every pair of distinct vertices $u$ and $v$, then $W$ is called an \emph{m-resolving set} of $G$. If $G$ has an m-resolving set, then an m-resolving set having minimum cardinality is called a \emph{multiset basis} and its cardinality is called the \emph{multiset dimension} of $G$, denoted by $md(G)$; otherwise we say that $G$ has an infinite multiset dimension and we write $md(G) = \infty$.
\end{definition}

In this paper we present basic results on the multiset dimension. We obtain some (sharp) bounds for multiset dimension of arbitrary graphs in term of its metric dimension, order, or diameter (Chapter \ref{bounds}). We provide some necessary conditions for a graph to have finite multiset dimension, with an example of an infinite family of graphs where those necessary conditions are also sufficient (Chapter \ref{finite}). We also show that the multiset dimension of any graph other than a path is at least $3$ and finally we provide two families of graphs having the multiset dimension $3$ (Chapter \ref{3}).

\section{Some bounds} \label{bounds}

From the definitions it is clear that an m-resolving set of a graph $G$ is also a resolving set of $G$, which leads to the following.

\begin{lemma} \label{dim}
$md(G) \geq dim(G)$.
\end{lemma}

This bound is tight, since it is well-known that the metric dimension of a graph $G$ is one if and only if $G$ is a path and we obtain an equivalent result for the multiset dimension. Here we use $P_n$ to denote a path on $n$ vertices.

\begin{theorem}\label{path}
The multiset dimension of a graph $G$ is one if and only if $G$ is a path.
\end{theorem}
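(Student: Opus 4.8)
The plan is to prove the two implications separately, handling the easy direction first and then exploiting the fact that for a single-vertex set the multiset and the vector representations carry identical information.

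First I would establish that every path satisfies $md(G)=1$. Given $G=P_n$ with $n\ge 2$, I would choose $w$ to be an endpoint. The distances from $w$ to the vertices of $P_n$ are then $0,1,2,\dots,n-1$, which are pairwise distinct, so the singleton $W=\{w\}$ assigns a distinct representation multiset $r_m(v|W)=\{d(v,w)\}$ to every vertex. Since the empty set gives each vertex the empty multiset and hence fails to m-resolve once $n\ge 2$, this yields $md(P_n)=1$.

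For the converse, the key observation is that when $|W|=1$, say $W=\{w\}$, the multiset $r_m(v|W)=\{d(v,w)\}$ and the vector $r(v|W)=(d(v,w))$ encode exactly the same datum, namely the single distance $d(v,w)$. Consequently $\{w\}$ is an m-resolving set if and only if it is a resolving set, so $md(G)=1$ forces $dim(G)=1$, and invoking the well-known characterization that $dim(G)=1$ holds precisely for paths finishes the argument; this is consistent with Lemma \ref{dim}, which already gives $dim(G)\le md(G)$. If instead one wants a self-contained argument, I would reason directly: assuming $\{w\}$ m-resolves a graph on $n$ vertices, the $n$ distances $d(v,w)$ are pairwise distinct, and being distinct nonnegative integers each bounded by the eccentricity of $w$ (at most $n-1$ in a connected graph on $n$ vertices) they must be exactly $0,1,\dots,n-1$. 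Writing $v_i$ for the unique vertex at distance $i$ from $w$, a shortest path from $w$ to $v_i$ must pass through the unique vertices at each intermediate distance, forcing each edge $v_{i-1}v_i$; and any edge $v_iv_j$ with $i<j$ would give $j=d(v_j,w)\le d(v_i,w)+1=i+1$, ruling out chords. Hence $G$ is exactly the path $v_0v_1\cdots v_{n-1}$.

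The argument is short, and the only genuinely substantive point is the recognition that m-resolving and resolving coincide for singletons (equivalently, in the self-contained version, that distinctness of $n$ distances bounded by $n-1$ forces the full range together with the absence of chords). Neither step presents a serious obstacle, so I expect the proof to be routine once these observations are in place.
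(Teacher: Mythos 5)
Your proof is correct and takes essentially the same route as the paper's: an endpoint m-resolves a path, and conversely a singleton m-resolving set forces $n$ pairwise distinct distances, hence a vertex at distance $n-1$ and therefore a path. You in fact supply more detail than the paper on the final step (the paper jumps from ``diameter $n-1$'' to ``path,'' whereas you explicitly rule out chords), and your alternative reduction to the known characterization of $dim(G)=1$ is also valid.
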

\begin{proof}
The set containing a pendant vertex of a path is an m-resolving set, thus $md(P_n)=1$. Now we show that if $md(G)=1$, then $G$ is a path. Let $W=\{ w \}$ be a multiset basis of a graph $G$. Then $d(u,w) \neq d(v,w)$ for any two vertices $u, v \in V(G)$, which means that there exists a vertex $x$ such that $d(x, w)=n-1$ (where $n$ is the order of $G$). This implies that the diameter of $G$ is $n-1$, hence $G$ is the path $P_n$.
\end{proof}

A more non-trivial example of a family of graphs with $md(G) = dim(G)$ is presented in Theorem \ref{md=diam}. To prove the theorem a few definitions and properties are in order. A \emph{major vertex of $G$} is a vertex of degree at least $3$. A pendant vertex $u$ of $G$ is a \emph{terminal vertex of a major vertex $v$ of $G$} if $d(u,v)<d(u,w)$ for every other major vertex $w$ of $G$. A major vertex $v$ of $G$ is an \emph{exterior major vertex of $G$} if it has at least one terminal vertex. Let $\sigma(G)$ denote the total number of terminal vertices of the major vertices of G and let $ex(G)$ denote the number of exterior major vertices of $G$. Then the following lower bound holds for general graphs.

\begin{lemma} \cite{CEJO00} \label{lbChartrand}
$dim(G)\geq \sigma(G) - ex(G)$.
\end{lemma}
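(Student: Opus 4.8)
The plan is to show that every resolving set $W$ must contain at least $\sigma(G) - ex(G)$ vertices, by arguing locally at each exterior major vertex and then summing. For an exterior major vertex $v$ with a terminal vertex $u$, I would first record the structural fact that the $v$–$u$ geodesic has all of its internal vertices of degree $2$: if some internal vertex had degree $\geq 3$ or another major vertex intervened, then $u$ would fail to have $v$ as its strictly nearest major vertex, contradicting that $u$ is terminal to $v$. Call this geodesic the leg $\ell(v,u)$. The key consequence is a \emph{funneling property}: for any vertex $x \neq v$ on $\ell(v,u)$ and any vertex $w$ not lying on that leg, every shortest $x$–$w$ path passes through $v$, so $d(x,w) = d(x,v) + d(v,w)$.

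The heart of the argument is the following claim, which I would establish next. If $u_1, u_2$ are two distinct terminal vertices of the same exterior major vertex $v$, then at least one of the two legs $\ell(v,u_1), \ell(v,u_2)$ contains a vertex of $W$ other than $v$ itself. Suppose to the contrary that neither does. Assuming without loss of generality $d(v,u_1) \le d(v,u_2)$, choose the vertex $y$ on $\ell(v,u_2)$ with $d(v,y) = d(v,u_1)$. For every $w \in W$, either $w = v$ or $w$ lies outside both legs, so in either case $d(u_1,w) = d(u_1,v) + d(v,w) = d(y,v) + d(v,w) = d(y,w)$. Hence $r(u_1 \mid W) = r(y \mid W)$ while $u_1 \neq y$ (the two legs meet only at $v$, and both vertices differ from $v$), contradicting that $W$ resolves $G$.

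From the claim, among the terminal legs at each exterior major vertex $v$, at most one can be free of $W$-vertices; writing $ter(v)$ for the number of terminal vertices of $v$, the legs of $v$ collectively carry at least $ter(v) - 1$ vertices of $W$. Because each terminal vertex is assigned to its unique nearest major vertex, the legs belonging to distinct exterior major vertices are internally disjoint, so these counts do not overlap. Summing over all exterior major vertices gives $|W| \ge \sum_v \bigl(ter(v) - 1\bigr) = \sigma(G) - ex(G)$, which is the asserted bound.

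The step I expect to be the main obstacle is establishing the funneling property rigorously — in particular, confirming that the internal vertices of a terminal leg genuinely have degree $2$ and that no shortcut bypassing $v$ exists, since this is precisely where the definition of a terminal vertex (having $v$ as its strictly nearest major vertex) must be invoked. Once that structural fact is secured, the distance computation in the claim and the final counting are routine.
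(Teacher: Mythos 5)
The paper does not prove this lemma itself but cites it from \cite{CEJO00}, and your argument is correct and is essentially the standard one from that reference: the internal vertices of each terminal leg have degree $2$, so distances from leg vertices to outside vertices funnel through the unique nearest major vertex, whence at most one leg per exterior major vertex can avoid the resolving set, and the pairwise internal disjointness of the legs makes the counts additive. I see no gaps; the only points you flag as delicate (degree-$2$ internal vertices, legs meeting only at $v$) follow directly from the pendant vertex having a unique maximal degree-$2$ thread ending at its nearest major vertex.
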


By a \emph{branch of a vertex $u$} we mean a maximal subgraph with the vertex $u$ as an end point and by a \emph{path branch at $u$} we mean a branch of $u$ which is isomorphic to a path.



Let $H$ be a graph and $p$ a positive integer. We denote by $H^{(p)}$ the graph obtained from $H$ by subdividing each of its edges $p-1$ times.
It is easy to check that the metric dimension of $K_{1,n}^{(p)}$ is always $n-1$, but that is not the case for the multiset dimension.

\begin{theorem}\label{md=diam}
Let $K_{1,n}$ be a star on $n+1$ vertices.
For any positive integer $n$, $md(K_{1,n}^{(p)})=dim(K_{1,n}^{(p)}) = n-1$ if and only if $p\geq n-1$.
\end{theorem}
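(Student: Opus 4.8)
The plan is to fix notation, use Lemma \ref{dim} to dispose of the lower bound, and then prove the two implications separately. Write $c$ for the centre of the subdivided star and, for $1\le i\le n$ and $0\le j\le p$, let $v_{i,j}$ be the vertex at distance $j$ from $c$ on the $i$-th leg, so that $v_{i,0}=c$ and $v_{i,p}$ is a leaf. The distances are then transparent: $d(v_{i,j},v_{i,j'})=|j-j'|$ within a leg and $d(v_{i,j},v_{i',j'})=j+j'$ across two distinct legs. Since the paper already records $dim(K_{1,n}^{(p)})=n-1$, Lemma \ref{dim} gives $md(K_{1,n}^{(p)})\ge n-1$ for free, so the whole statement reduces to: (i) if some m-resolving set has exactly $n-1$ vertices then $p\ge n-1$, and (ii) if $p\ge n-1$ then an m-resolving set of size $n-1$ exists.

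For (i) I would analyse the shape of a hypothetical m-resolving set $W$ with $|W|=n-1$. First, no two legs can both be free of landmarks: if legs $i$ and $i'$ carry no element of $W$, then for every depth $j$ every distance from $v_{i,j}$ and from $v_{i',j}$ to a landmark has the form $j+(\text{its depth})$, so $r_m(v_{i,j}|W)=r_m(v_{i',j}|W)$, contradicting resolvability. Hence at most one leg is free, and since $|W|=n-1$ this forces exactly one free leg and exactly one landmark on each of the other $n-1$ legs, at depths $a_1,\dots,a_{n-1}$. Next these depths must be pairwise distinct: if two landmark-carrying legs $k,k'$ shared a depth $a$, then for every $j$ the vertices $v_{k,j}$ and $v_{k',j}$ contribute the symmetric pair $\{|j-a|,\,j+a\}$ against a common remainder, again giving equal representations. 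Pairwise distinct depths lie in $\{1,\dots,p\}$, so $p\ge n-1$, which is (i).

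For (ii) I would exhibit the explicit set $W=\{v_{i,i}:1\le i\le n-1\}$, which is legitimate precisely because $p\ge n-1$ guarantees depth $i$ on each leg. With this choice $r_m(c|W)=\{1,2,\dots,n-1\}$, $r_m(v_{n,j}|W)=\{j+1,\dots,j+n-1\}$ is a block of $n-1$ consecutive integers, and for $1\le k\le n-1$ one gets $r_m(v_{k,j}|W)=\bigl(\{j+1,\dots,j+n-1\}\setminus\{j+k\}\bigr)\cup\{|j-k|\}$, i.e.\ the same block with the entry $j+k$ deleted and the single own-leg entry $|j-k|$ inserted. The goal is to show that $v\mapsto r_m(v|W)$ is injective by recovering the leg and depth from the representation: the large entries recover $j$ through the consecutive-block pattern, and the one displaced entry then pins down the leg $k$, with $c$ and the free leg $n$ appearing as the two degenerate patterns.

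The main obstacle is exactly this injectivity check. The clean cases — distinguishing $c$, distinguishing the free leg, and separating vertices of different depth via the top entry $j+n-1$ — are routine, but the entry $|j-k|$ can fall inside the interval $[j+1,\,j+n-1]$ of cross-distances, so one cannot simply read it off as the smallest value; the argument must use the multiplicities and the position of the gap in the block to certify that no two of the listed multisets coincide. I expect this case analysis, and in particular ruling out a collision between $c$ and a near-centre vertex on the deepest-landmark leg, to be where the real work lies, since that is the tightest constraint and is precisely what makes the threshold fall at $p=n-1$.
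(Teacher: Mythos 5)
Your setup and your part (i) are essentially the paper's own argument: the paper also gets the lower bound from Lemma \ref{lbChartrand} together with Lemma \ref{dim}, and for the converse shows that an $(n-1)$-set either leaves two legs landmark-free or puts two landmarks at equal depth on distinct legs, each of which forces a collision. The genuine gap is in part (ii), and it is not just an unfinished verification: the injectivity you defer to ``where the real work lies'' is actually false for your $W=\{v_{i,i}:1\le i\le n-1\}$. For $n=3$, $p=2$ one has $r_m(c|W)=\{1,2\}=r_m(v_{2,1}|W)$ --- precisely the centre-versus-near-centre collision you flagged, and unavoidable since Lemma \ref{not2} already rules out multiset dimension $2$, so the theorem is false at $n=3$. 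The failure persists for all odd $n$: with $j=(n-3)/2$ the free-leg vertex $v_{n,j}$ has representation $\{j+1,\dots,j+n-1\}$, while $v_{n-1,j+1}$ has representation $\{j+2,\dots,j+n-1\}\cup\{n-2-j\}$ and $n-2-j=j+1$, so the two multisets coincide (concretely, for $n=5$, $p=4$: $r_m(v_{5,1}|W)=\{2,3,4,5\}=r_m(v_{4,2}|W)$).

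Worse, for $p=n-1$ this cannot be repaired by a cleverer choice of depths: your own part (i) shows that any m-resolving set of size $n-1$ must place exactly one landmark on each of $n-1$ legs at pairwise distinct depths in $\{1,\dots,p\}=\{1,\dots,n-1\}$, so the depth set is forced and your $W$ is the only candidate up to symmetry. Hence for odd $n$ the ``if'' direction of the theorem fails as stated; the paper's proof contains the same hole, disposing of the verification with ``it is easy to see that $W$ is m-resolving.'' To salvage anything you would need to restrict the statement (your $n=4$ computation does check out, and even $n$ appears to survive the analysis via maxima, sums and the position of the displaced entry $|j-k|$), and in any case that injectivity analysis must actually be carried out, since it is exactly where the content --- and, as it turns out, the counterexamples --- live.
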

\begin{proof}
We denote by $v_c$ the center of the star.

Consider $p\geq n-1$. By Lemma \ref{lbChartrand}, $md(K_{1,n}^{(p)})\geq dim(K_{1,n}^{(p)})=n-1$. Now choose a set $W$ containing $n-1$ vertices, each from distinct subdivided edge and each has distinct distance from $v_c$. It is easy to see that $W$ is m-resolving and so $md(K_{1,n}^{(p)})=dim(K_{1,n}^{(p)})$.


Let $p<n-1$ and consider an arbitrary set $W$ on $n-1$ vertices. If all vertices of $W$ are in different branch paths at $v_c$, then there must be (at least) two vertices of $W$ having same distance from $v_c$. Those vertices will then have the same representation. If all vertices of $W$ are not in different different branch paths at $v_c$, then there are at least $2$ different branch paths at $v_c$, say $P_1$ and $P_2$, containing no vertex of $W$ (we do not mind $v_c$ to be in $W$). Then the $i$-th vertex of $P_1$ and $P_2$ have the same representation for every $i = 1, 2, \dots, p$.
Therefore $W$ is not an m-resolving set.
\end{proof}

It is interesting to ask whether we could construct a graph where the gap between its metric and multiset dimensions could be as large as we want.
\begin{problem}
Let $c$ be a positive integer. Find a graph $G$ such that $md(G)=dim(G)+c$.
\end{problem}

Another lower bound is obtained by proving that no graph has multiset dimension $2$ and using the fact that only paths have multiset dimension 1 (Theorem \ref{path}). The sharpness of this bound will be discussed in Section \ref{3}.

\begin{lemma}\label{not2}
No graph has multiset dimension $2$.
\end{lemma}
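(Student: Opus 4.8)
The plan is to show that no two-element subset of the vertex set can ever be m-resolving, which together with Theorem \ref{path} (paths, and only paths, have multiset dimension $1$) immediately rules out $md(G) = 2$. The key observation is that, unlike the ordered representation $r(v|W)$ used for the ordinary metric dimension, the unordered representation $r_m(v|W)$ cannot tell the two elements of a two-element set apart from each other.

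Concretely, I would begin by taking an arbitrary set $W = \{w_1, w_2\}$ of two distinct vertices of a connected graph $G$ and writing $d = d(w_1, w_2)$. I would then simply evaluate the two representation multisets at the vertices of $W$ themselves: since $d(w_1, w_1) = 0$ and $d(w_1, w_2) = d$, we get $r_m(w_1 | W) = \{0, d\}$; and since $d(w_2, w_1) = d$ and $d(w_2, w_2) = 0$, we get $r_m(w_2 | W) = \{d, 0\} = \{0, d\}$. As multisets these are identical, so $W$ fails to distinguish its own two members and is not an m-resolving set.

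Since $W$ was an arbitrary two-element set, no such set is m-resolving, and hence $md(G) \neq 2$ for every graph $G$. Combined with the fact that multiset dimension $1$ forces $G$ to be a path, this gives the desired conclusion.

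There is essentially no hard step here; the entire content is the symmetry observation that the $0$ contributed by $w_i$ to its own representation occupies no distinguished \emph{slot} in a multiset, so the two code vertices are forced to collide. The one thing worth flagging explicitly is precisely this contrast with the ordered setting, where the position of the zero entry would separate $w_1$ from $w_2$ — which is exactly why the analogous statement fails for the ordinary metric dimension.
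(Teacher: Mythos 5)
Your argument is correct and is essentially identical to the paper's proof: both observe that for $W=\{w_1,w_2\}$ the two code vertices satisfy $r_m(w_1|W)=\{0,d(w_1,w_2)\}=r_m(w_2|W)$, so no two-element set can be m-resolving. Your framing via arbitrary two-element sets rather than a direct contradiction is a cosmetic difference only.
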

\begin{proof}
Assume that $md(G) = 2$ for some graph $G$. Let $W = \{w_1, w_2\}$ be an m-resolving set of $G$. Then $r_m(w_1|W) = \{0, d(w_1, w_2)\} = r_m(w_2|W)$, a contradiction.
\end{proof}

\begin{theorem} \label{bound1}
Let $G$ be a graph other than a path. Then $md(G) \ge 3$.
\end{theorem}

Another bound relates the multiset dimension of a graph with its diameter. Theorem \ref{nd} gives a better lower bound than the one presented in Theorem \ref{bound1}. For positive integers $n$ and $d$, we define $f(n,d)$ to be the least positive integer $k$ for which $\frac{(k+d-1)!}{k!(d-1)!} + k \ge n$.

\begin{theorem}\label{nd}
If $G$ is a graph of order $n \ge 3$ and diameter $d$, then $md(G) \ge f(n,d)$.
\end{theorem}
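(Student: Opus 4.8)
The plan is to argue by a counting (pigeonhole) bound on the number of distinct representation multisets that are available. Let $k = md(G)$ and let $W$ be a multiset basis, so that $|W| = k$ and the $n$ multisets $r_m(v|W)$, $v \in V(G)$, are pairwise distinct. First I would split the vertex set into those vertices lying in $W$ (there are exactly $k$ of these) and those lying outside $W$ (there are exactly $n-k$ of these).

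The key observation is that a vertex $v \notin W$ satisfies $d(v,w) \geq 1$ for every $w \in W$, and of course $d(v,w) \leq d$ since $d$ is the diameter; hence $r_m(v|W)$ is a multiset of size $k$ whose entries all come from the $d$-element set $\{1, 2, \dots, d\}$. The number of such multisets is the number of multisets of size $k$ drawn from $d$ symbols, namely $\binom{k+d-1}{k} = \frac{(k+d-1)!}{k!(d-1)!}$. Since $W$ is m-resolving, the $n-k$ vertices outside $W$ must receive pairwise distinct representations, all drawn from this pool, so $n - k \leq \frac{(k+d-1)!}{k!(d-1)!}$. Rearranging gives $\frac{(k+d-1)!}{k!(d-1)!} + k \geq n$, which is precisely the defining inequality for $f(n,d)$ evaluated at $k = md(G)$.

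To finish I would note that the function $g(k) = \frac{(k+d-1)!}{k!(d-1)!} + k$ is strictly increasing in $k$, so the set of integers $k$ satisfying $g(k) \geq n$ is upward closed; since $f(n,d)$ is by definition the least such integer and $k = md(G)$ is one of them, we conclude $md(G) \geq f(n,d)$.

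The argument is short, and the only place that needs care is the treatment of the vertices of $W$ themselves: each such vertex $v$ has $d(v,v) = 0$ in its representation, so its multiset contains a $0$ and can never coincide with the all-positive multiset of an outside vertex. This is exactly why it is legitimate to count only the $n-k$ outside vertices against the $\binom{k+d-1}{k}$ zero-free multisets and then simply add back the $k$ interior vertices, rather than forcing all $n$ vertices into a single pool; building this separation into the count is what produces the $+k$ term in the bound. The monotonicity of $g$, though routine (the binomial term is nondecreasing in $k$ and the $+k$ summand makes the total strictly increasing), should still be recorded so that the passage from the displayed inequality to $md(G) \geq f(n,d)$ is rigorous.
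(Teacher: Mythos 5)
Your proof is correct and follows essentially the same route as the paper's: count the multisets of size $k$ with entries in $\{1,\dots,d\}$ available to the $n-k$ vertices outside $W$, deduce $\binom{k+d-1}{d-1} \ge n-k$, and conclude from the minimality in the definition of $f(n,d)$. Your added remarks on why the $k$ vertices of $W$ can be set aside (their multisets contain a $0$) and on the passage to $md(G) \ge f(n,d)$ only make explicit what the paper leaves implicit.
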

\begin{proof}
Let $W$ be a multiset basis of $G$ having $k$ vertices. If $x$ is a vertex not in $W$, then $r_m(x|W)=\{1^{m_1}, 2^{m_2},\dots, d^{m_d}\}$, where $m_1 + m_2 + \dots + m_d = k$ and $0 \le m_i \le k$ for each $i = 1, 2, \dots , k$. Then there are $C(k+d-1,d-1)$ different possibilities for representation of $x$. Since we have $n-k$ vertices not in $W$, $\frac{(k+d-1)!}{k!(d-1)!}$ must be at least $n-k$. Hence $\frac{(k+d-1)!}{k!(d-1)!} + k \ge n$.
\end{proof}

\begin{problem}
Is the bound in Theorem \ref{nd} sharp?
\end{problem}

So far we only obtain lower bounds for the multiset dimension of an arbitrary graph. If a graph $G$ on $n$ vertices has finite multiset dimension, it is obvious that $md(G)\leq n$. However we suspect that this bound is not sharp and so we suggest the following conjecture.
\begin{conjecture} \label{n-1}
If $G$ is a graph on $n$ vertices having finite multiset dimension, then $md(G)\leq n-1$.
\end{conjecture}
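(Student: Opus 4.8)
The plan is to argue by contradiction. Since every m-resolving set is a subset of $V(G)$ and $|V(G)|=n$, a finite multiset dimension automatically satisfies $md(G)\le n$, so it suffices to exclude the single case $md(G)=n$. If $md(G)=n$, then the unique candidate of size $n$ forces $V(G)$ itself to be m-resolving, while no set $V(G)\setminus\{w\}$ of size $n-1$ is m-resolving. First I would analyze what the failure of one deletion $W=V(G)\setminus\{w\}$ means. A collision is a pair of distinct vertices with equal representation in $W$, and either one of them is $w$ or neither is. The first type cannot occur: since $G$ is simple and connected, each $r_m(x\,|\,V(G))$ contains the value $0$ with multiplicity exactly one (from $d(x,x)$), so for $x\neq w$ the multiset $r_m(x\,|\,W)$ still contains a $0$, whereas $r_m(w\,|\,W)=\{d(w,y):y\neq w\}$ contains none. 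Hence the only way $W$ can fail is a collision between two vertices $u,v$ both different from $w$.

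Next I would pin down such collisions. For $u,v\neq w$ the multiset $r_m(u\,|\,W)$ is $r_m(u\,|\,V(G))$ with the single entry $d(u,w)$ deleted, and similarly for $v$. Thus $r_m(u\,|\,W)=r_m(v\,|\,W)$ holds exactly when $r_m(u\,|\,V(G))$ and $r_m(v\,|\,V(G))$ differ in a single value, say $r_m(u\,|\,V(G))=M\cup\{a\}$ and $r_m(v\,|\,V(G))=M\cup\{b\}$ with $a\neq b$, together with $d(u,w)=a$ and $d(v,w)=b$. Because the multiplicity of $0$ is one in both representations, necessarily $a,b\ge 1$, so no such $w$ can equal $u$ or $v$. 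Call a pair $\{u,v\}$ \emph{vulnerable} if its two representations differ in exactly one value $(a,b)$ as above, and call $w$ a \emph{killer} of it if $d(u,w)=a$ and $d(v,w)=b$. The assumption $md(G)=n$ then says precisely that \emph{every} vertex is a killer of some vulnerable pair, i.e. the killer sets cover $V(G)$. The problem is thereby reduced to producing one vertex that kills no vulnerable pair.

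To find such a vertex I would try to show the killer sets cannot exhaust $V(G)$, along two complementary angles. The first is a counting argument: bound the number of (vulnerable pair, killer) incidences and compare it with $n$, using m-resolvability of $V(G)$ to limit both how many pairs can be simultaneously vulnerable and how large a single killer set $\{w:d(u,w)=a,\ d(v,w)=b\}$ can be. The second is to exhibit a structurally extremal, hence un-killable, vertex: a leaf $w$ (where $d(x,w)=d(x,w')+1$ rigidly ties the deletion to its neighbour $w'$), a peripheral vertex, or a vertex of extreme transmission $s(x)=\sum_y d(x,y)$, observing that any killer of a pair with values $(a,b)$ must satisfy $d(u,w)-d(v,w)=a-b=s(u)-s(v)$.

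The main obstacle, and the reason this is left as a conjecture, is the situation Delorme flagged in the introduction: graphs in which many pairs are vulnerable and killer sets are large, so that the cover can approach all of $V(G)$. One helpful structural fact is that m-resolvability of $V(G)$ already forces $G$ to be \emph{asymmetric}, since any automorphism $\sigma$ satisfies $r_m(x\,|\,V(G))=r_m(\sigma(x)\,|\,V(G))$ for all $x$ and hence must be the identity; thus the true obstruction is not literal symmetry but near-coincidences of distance distributions. I expect the delicate step to be quantifying this distributional rigidity strongly enough to guarantee that at least one vertex escapes every killer set, and this is where a full proof would have to do its real work.
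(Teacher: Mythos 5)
The statement you were asked to prove is stated in the paper as a \emph{conjecture}: the authors give no proof, and to the best of the paper's knowledge the claim is open. So there is no argument of theirs to compare yours against; the only question is whether your proposal closes the gap, and it does not. Your reduction is correct and cleanly executed: finiteness of $md(G)$ gives $md(G)\le n$ immediately; if $md(G)=n$ then $V(G)$ is m-resolving while no $(n-1)$-subset is; a collision in $V(G)\setminus\{w\}$ cannot involve $w$ itself (the multiplicity-of-$0$ argument is right); and the remaining collisions are exactly your ``vulnerable pair / killer'' incidences, with $a,b\ge 1$ forcing the killer to be distinct from the pair. The side observations are also sound: $V(G)$ being m-resolving forces $G$ to be asymmetric, and a killer of a pair with values $(a,b)$ must satisfy $d(u,w)-d(v,w)=a-b=s(u)-s(v)$ where $s$ is the transmission.

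The genuine gap is that the essential step --- showing that the killer sets cannot cover all of $V(G)$ --- is never carried out. You name two possible attacks (an incidence count, and the exhibition of an extremal un-killable vertex such as a leaf or a peripheral vertex), but neither is pushed to a contradiction, and you concede as much in your final paragraph. As it stands you have reformulated the conjecture as an equivalent covering statement, not proved it; for instance, the leaf idea stalls because $d(x,w)=d(x,w')+1$ does not by itself prevent $w$ from killing some vulnerable pair. This is a reasonable research plan and the reformulation may well be useful, but it is not a proof, and the statement should remain labelled a conjecture.
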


Combining Theorem \ref{bound1} and Conjecture \ref{n-1}, we ask the following.
\begin{problem} \label{allc}
For any integer $3\leq c \leq n-1$, find a graph $G$ with $md(G)=c$.
\end{problem}

It is also interesting to bound the multiset dimension by using other graph parameters. Here we propose to study two particular parameters.
\begin{problem}
Let $G$ be a graph on $m$ edges. If $md(G)$ is finite, find lower and upper bounds on $md(G)$ with respect to $m$.
\end{problem}

\begin{problem}
Let $G$ be a graph with minimum degree $\delta$. If $md(G)$ is finite, find bounds on $md(G)$ with respect to $\delta$.
\end{problem}

\section{Finiteness of multiset dimension} \label{finite}

In this section we give two necessary conditions for a graph to have finite multiset dimension.

\begin{lemma}\label{dist2}
Let $G$ be a graph and let $W'$ be a set of vertices, where $|W'| \ge 2$.
If every pair of vertices in $W'$ is of distance at most $2$, then $W'$ is not an m-resolving set of $G$.
\end{lemma}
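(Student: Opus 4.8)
The plan is to exhibit two distinct vertices of $G$ with identical representation multisets with respect to $W'$, and the natural place to look is inside $W'$ itself. Write $k = |W'| \ge 2$ and take any $w \in W'$. Since $w$ is one of the vertices being measured against, the multiset $r_m(w|W')$ contains exactly one entry equal to $0$, namely $d(w,w)$. By the hypothesis that every pair in $W'$ is at distance at most $2$, the remaining $k-1$ entries are all drawn from $\{1,2\}$. Hence $r_m(w|W')$ is completely determined by a single number: the count $a_w$ of vertices of $W'$ lying at distance $1$ from $w$, with the distance-$2$ count being $(k-1)-a_w$.

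First I would observe that $a_w$ is precisely the degree of $w$ in the induced subgraph $H := G[W']$, because two distinct vertices of $W'$ are at distance $1$ in $G$ exactly when they are adjacent. Thus two vertices of $W'$ share the same representation multiset as soon as they have the same degree in $H$. The problem therefore reduces to the purely graph-theoretic claim that $H$, which has $k \ge 2$ vertices, contains two vertices of equal degree.

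The hard part, and really the only subtle point, is this last reduction: the possible degrees $0,1,\dots,k-1$ number exactly $k$, matching the number of vertices, so bare pigeonhole does not suffice. I would close the gap with the standard observation that degrees $0$ and $k-1$ cannot coexist in a simple graph, since a vertex of degree $k-1$ is adjacent to all others and hence leaves none isolated. Consequently the degrees of $H$ take at most $k-1$ distinct values among its $k$ vertices, forcing two vertices $w_i, w_j \in W'$ with $a_{w_i} = a_{w_j}$. These yield $r_m(w_i|W') = r_m(w_j|W')$ with $w_i \ne w_j$, so $W'$ fails to be an m-resolving set of $G$, which is exactly what we wanted.
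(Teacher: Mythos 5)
Your proof is correct and is essentially the paper's argument in different clothing: the paper enumerates the $p$ possible multisets $\{0,1^{m_1},2^{m_2}\}$, notes all must occur, and derives a contradiction from the coexistence of $\{0,1^{p-1}\}$ and $\{0,2^{p-1}\}$, which is exactly your observation that degrees $0$ and $k-1$ cannot coexist in the induced subgraph $G[W']$. Your repackaging via the classical ``every simple graph on at least two vertices has two vertices of equal degree'' fact is clean, but the combinatorial content is identical.
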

\begin{proof}
We prove the result by contradiction. Assume that every pair of vertices in $W$ is of distance at most $2$ and $W$ is an m-resolving set of $G$. We denote the vertices in $W$ by $w_1, w_2, \dots , w_p$, where $p \ge 2$. For $i = 1, 2, \dots p$, we have $r_m(w_i | W) = \{0, 1^{m_1}, 2^{m_2}\}$, where $m_1 + m_2 = p-1$. Since we have $p$ vertices in $W$ and they have different representations, their representations must be
$$\{0, 1^{p-1}\}, \{0, 1^{p-2}, 2\}, \dots , \{0, 1, 2^{p-2}\}, \{0, 2^{p-1}\}.$$
Without loss of generality we can assume that the vertex having the representation $\{0, 1^{p-1}\}$ is $w_1$ and the vertex having the representation $\{0, 2^{p-1}\}$ is $w_p$. Since $r_m(w_1 | W) = \{0, 1^{p-1}\}$, it follows that $d(w_1, w_p) = 1$, a contradiction to $r_m(w_p | W) = \{0, 2^{p-1}\}$. Hence, $W$ is not an m-resolving set of $G$.
\end{proof}

In a graph of diameter at most $2$, the distance between any two vertices is at most $2$, thus Theorem \ref{diam2} is a corollary of Lemma \ref{dist2}.

\begin{theorem}\label{diam2}
If $G$ is a non-path graph of diameter at most $2$, then $md(G) = \infty$.
\end{theorem}

This means that cycles with at most $5$ vertices, complete graphs, stars, the Petersen graph and strongly regular graphs have infinite multiset dimension. Since it is well known that ``almost all graphs have diameter 2" \cite{MM66}, we could also state that the following corollary.

\begin{corollary}\label{almostall}
Almost all graphs have infinite multiset dimension.
\end{corollary}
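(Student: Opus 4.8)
The plan is to combine Theorem \ref{diam2} with the cited fact \cite{MM66} that almost all graphs have diameter $2$. Recall that a property is said to hold for \emph{almost all graphs} if, writing $a_n$ for the number of labeled graphs on $n$ vertices enjoying the property and $g_n = 2^{\binom{n}{2}}$ for the total number of labeled graphs on $n$ vertices, one has $a_n / g_n \to 1$ as $n \to \infty$. Thus the task reduces to showing that, apart from a vanishingly small fraction of graphs, every graph of diameter $2$ falls under the hypotheses of Theorem \ref{diam2}.

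First I would record the elementary observation that, for $n \ge 4$, no graph of diameter at most $2$ on $n$ vertices is a path: the unique path on $n$ vertices is $P_n$, whose diameter is $n - 1 \ge 3$. Consequently, every graph on $n \ge 4$ vertices of diameter at most $2$ is a \emph{non-path} graph, and Theorem \ref{diam2} then guarantees that each such graph has infinite multiset dimension. This single remark disposes of the only delicate interaction in the argument, namely the non-path caveat attached to Theorem \ref{diam2}.

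It then remains only to transfer the asymptotic statement. By \cite{MM66}, the proportion of graphs on $n$ vertices of diameter $2$ tends to $1$ as $n \to \infty$. By the previous paragraph, for $n \ge 4$ every one of these graphs has infinite multiset dimension, so the proportion of graphs on $n$ vertices with infinite multiset dimension is at least the proportion of diameter-$2$ graphs, which tends to $1$. Since a proportion bounded above by $1$ and below by a quantity tending to $1$ must itself tend to $1$, this is precisely the assertion that almost all graphs have infinite multiset dimension.

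I do not anticipate a genuine obstacle: the corollary is essentially a direct consequence of Theorem \ref{diam2} once the external result of \cite{MM66} is invoked. The only step meriting attention is the compatibility of the non-path hypothesis with the diameter-$2$ regime, and this is settled immediately by noting that paths on at least four vertices have diameter at least three, so they never compete with the overwhelming population of diameter-$2$ graphs.
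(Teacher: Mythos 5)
Your argument is correct and is essentially the paper's own: the corollary is obtained by combining Theorem \ref{diam2} with the fact from \cite{MM66} that almost all graphs have diameter $2$. Your extra remark that paths on $n\ge 4$ vertices have diameter $n-1\ge 3$, so the non-path hypothesis causes no loss, is a careful touch the paper leaves implicit.
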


The last necessary condition needs the definition of twins as follow. Two vertices $u$ and $v$ are said to be \emph{twins} if $N(u)\setminus\{v\}=N(v)\setminus\{u\}$. If $u$ and $v$ are twins in $G$, then $d(u,x)=d(v,x)$ for any other vertex $x$ in $V(G)\setminus\{u,v\}$. We then define a relation $\sim$ where $u \sim v$ if and only if $u=v$ or $u$ and $v$ are twins. It is quite obvious that $\sim$ is an equivalence relation on $V(G)$ (see \cite{HMPSW10}). We shall denote by $v^*$ the equivalence class containing the vertex $v$. The following lemma shows the relation between the cardinality of an equivalence class with the multiset dimension.

\begin{lemma} \label{twins}
If $G$ contains a vertex $v$ with $|v^*|\geq 3$, then $md(G) = \infty$.
\end{lemma}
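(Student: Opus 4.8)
The plan is to show that no subset $W \subseteq V(G)$ can be m-resolving, by producing, for each choice of $W$, two distinct vertices with the same representation multiset. Since $|v^*| \geq 3$, I first fix three distinct vertices $u_1, u_2, u_3$ lying in the equivalence class $v^*$. Being distinct members of a single $\sim$-class, they are pairwise twins, so the distance property recalled just before the lemma applies to each pair: $d(u_a, x) = d(u_b, x)$ for every $x \in V(G) \setminus \{u_a, u_b\}$.

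Now fix an arbitrary $W \subseteq V(G)$. The main device is a pigeonhole argument on membership in $W$: among the three twins $u_1, u_2, u_3$, at least two of them, say $u_i$ and $u_j$, lie on the same side of the partition of $V(G)$ into $W$ and $V(G) \setminus W$. I will argue that such a pair always satisfies $r_m(u_i|W) = r_m(u_j|W)$, which shows $W$ is not m-resolving; as $W$ is arbitrary, this gives $md(G) = \infty$.

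To verify the equality I compare the two multisets over the terms $w \in W$. For every $w \in W$ with $w \neq u_i, u_j$ the twin property yields $d(u_i, w) = d(u_j, w)$, so these terms contribute identically to both multisets. It then remains to account for the at most two terms coming from $w \in \{u_i, u_j\} \cap W$. If both $u_i, u_j \notin W$, there are no such terms and the two representations are literally equal. If both $u_i, u_j \in W$, then $w = u_i$ contributes $0$ to $r_m(u_i|W)$ and $d(u_i,u_j)$ to $r_m(u_j|W)$, while $w = u_j$ contributes $d(u_i,u_j)$ to $r_m(u_i|W)$ and $0$ to $r_m(u_j|W)$; by symmetry of the distance function these extra contributions form the same pair $\{0, d(u_i,u_j)\}$ in each multiset. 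In either case the multisets coincide.

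I do not anticipate a serious obstacle: the argument is self-contained once the pigeonhole selection is in place. The only point needing care is the case in which both selected twins belong to $W$, where one must use that a multiset ignores the order of its entries, so the diagonal term $0$ and the cross term $d(u_i,u_j)$ merely swap roles between the two representations and thus match. It is worth noting that this approach never requires deciding whether the twin class is a clique or an independent set, nor computing $d(u_i,u_j)$ explicitly; only the stated twin distance identity and the symmetry of distance are used.
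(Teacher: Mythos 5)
Your proposal is correct and follows essentially the same route as the paper: pigeonhole three twins into "in $W$" or "not in $W$," then use the twin distance identity to show the selected pair has equal representation multisets. The only difference is that you spell out the case where both twins lie in $W$ (matching the contributions $\{0, d(u_i,u_j)\}$), which the paper leaves implicit.
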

\begin{proof}
Let $W$ be any m-resolving set of $G$ and $v_1, v_2, v_3$ be three vertices in $[v]$. Then either at least two of $v_1, v_2, v_3$ (say $v_1$ and $v_2$) are in $W$, or at least two of them (say $v_1$ and $v_2$) are not in $W$. In both cases these vertices cannot be resolved, because $d(v_1, x) = d(v_2, x)$ for any other vertex $x \in V(G)\setminus\{v_1,v_2\}$.
\end{proof}

It is clear that pendant vertices adjacent to a particular vertex are members of the same equivalence class. This leads to the following corollary.
\begin{corollary} \label{3pendants}
If $G$ contains a vertex which is adjacent to (at least) three pendant vertices, then $md(G) = \infty$.
\end{corollary}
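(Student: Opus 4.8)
The plan is to derive this immediately from Lemma \ref{twins} by observing that three pendant vertices attached to a common vertex automatically form a twin class of size at least three. First I would fix notation: let $u$ be a vertex of $G$ adjacent to three pendant vertices $x_1, x_2, x_3$. Since each $x_i$ is pendant, its unique neighbor is $u$, so $N(x_i) = \{u\}$ for $i = 1, 2, 3$.

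Next I would verify the twin condition for each pair. For $i \neq j$ the relevant identity reads $N(x_i) \setminus \{x_j\} = \{u\} = N(x_j) \setminus \{x_i\}$, where removing $x_j$ (respectively $x_i$) changes nothing because neither equals $u$. By definition this makes $x_i$ and $x_j$ twins, so $x_i \sim x_j$ for every pair. Because $\sim$ is an equivalence relation, the three vertices $x_1, x_2, x_3$ all lie in a single equivalence class, and hence $|x_1^*| \geq 3$. Applying Lemma \ref{twins} with $v = x_1$ then yields $md(G) = \infty$, which completes the argument.

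I do not expect any genuine obstacle here: the only thing that must be checked is that pendant vertices sharing a neighbor satisfy the twin condition, and this is immediate from the definition of the open neighborhood. This is a true corollary of the preceding lemma, so its entire content is the recognition of the twin structure among the pendant vertices together with the appeal to Lemma \ref{twins}.
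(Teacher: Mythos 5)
Your proposal is correct and matches the paper's approach exactly: the paper also derives the corollary from Lemma \ref{twins} via the observation that pendant vertices sharing a neighbour are twins and hence lie in one equivalence class of size at least $3$. Your explicit verification that $N(x_i)\setminus\{x_j\}=\{u\}=N(x_j)\setminus\{x_i\}$ just spells out what the paper states as "clear."
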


From Lemma \ref{twins}, in order to ensure that a graph $G$ has a finite multiset dimension, the cardinality of each equivalence class $v^*$ is either 1 or 2. In particular, when $|v^*|=2$, then exactly one of the two vertices in $v^*$ must be included in any m-resolving set of $G$, otherwise the two vertices have the same distances to any other vertices in $G$.

\begin{lemma} \label{choose1}
Let $G$ be a graph with finite multiset dimension. If $v$ is a vertex in $G$ with $|v^*|= 2$, then any m-resolving set of $G$ must contain exactly one vertex in $v^*$.
\end{lemma}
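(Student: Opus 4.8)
The plan is to argue by a two-case analysis on how an arbitrary m-resolving set $W$ intersects the class $v^*$. Write $v^* = \{v, v'\}$, so that $v$ and $v'$ are twins and hence $d(v,x) = d(v',x)$ for every $x \in V(G) \setminus \{v, v'\}$. Since $md(G)$ is finite, such a set $W$ exists, and I must rule out the two possibilities that would contradict ``exactly one'': namely that $W$ contains neither $v$ nor $v'$, and that $W$ contains both.

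First I would dispose of the case where $W \cap v^* = \emptyset$. Here every $w \in W$ satisfies $w \neq v, v'$, so by the twin property $d(v,w) = d(v',w)$ for each such $w$. Consequently $r_m(v|W)$ and $r_m(v'|W)$ are built from exactly the same list of distances, giving $r_m(v|W) = r_m(v'|W)$ and contradicting that $W$ is m-resolving. This is the same mechanism already used in Lemma \ref{twins}.

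Next I would handle the case $v, v' \in W$, which is the step that deserves a little care. The multiset $r_m(v|W)$ splits into the self-distance $d(v,v) = 0$, the cross-distance $d(v,v')$, and the distances $d(v,w)$ for $w \in W \setminus \{v,v'\}$; similarly $r_m(v'|W)$ splits into $0$, $d(v',v)$, and the distances $d(v',w)$ over the same $w$. By symmetry of the distance function $d(v,v') = d(v',v)$, and by the twin property $d(v,w) = d(v',w)$ for every $w \notin \{v,v'\}$. Hence both representations equal the multiset $\{0,\, d(v,v')\} \cup \{ d(v,w) : w \in W \setminus \{v,v'\} \}$, again forcing $r_m(v|W) = r_m(v'|W)$, a contradiction.

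Having excluded both extreme cases, the only remaining possibility is $|W \cap v^*| = 1$, which is the assertion. The only point that is not purely mechanical is recognizing that in the ``both in $W$'' case the self-term $0$ and the mutual term $d(v,v')$ appear identically in each representation, so the presence of $v$ and $v'$ inside $W$ does not help separate them; once this is observed the argument closes immediately.
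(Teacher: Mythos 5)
Your proof is correct and follows essentially the same reasoning the paper uses (the paper only sketches this lemma in the preceding paragraph, relying on the same twin-distance argument as in Lemma \ref{twins}). Your explicit treatment of the case where both vertices of $v^*$ lie in $W$ — observing that the self-term $0$ and the mutual term $d(v,v')$ occur identically in both representation multisets — is a detail the paper glosses over, but it is exactly the intended argument.
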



The necessary conditions in Theorem \ref{diam2} and Lemma \ref{twins} are, unfortunately, not sufficient for a graph to have finite multiset dimension. We shall give an example of a tree of diameter 4, having no vertex $v$ with $|v^*|\geq 3$, but has infinite multiset dimension. Let $T$ be the rooted tree of height $2$ with the root vertex having $3$ neighbours, each of them adjacent to $2$ pendant vertices. Let $W$ be any m-resolving set of $T$. By Lemma \ref{choose1}, exactly one pendant vertex from each pair must be in $W$. Now, consider the $3$ vertices in the first level. Either at least two of them are in $W$ or at least two of them are not in $W$. In both cases, the two vertices have the same representations with respect to $W$, regardless what the members of $W$ are.

However we shall present a family of trees where the conditions in Theorem \ref{diam2} and Lemma \ref{3pendants} are both necesarry and sufficient for the tree to have finite multiset dimension.

\begin{theorem}
The multiset dimension of a complete $k$-ary tree is finite if and only if $k=1$ or $2$. Moreover, if $T$ is a complete binary tree of height $h$, then $md(T)=2^h-1$.
\end{theorem}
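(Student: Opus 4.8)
The plan is to handle the finiteness dichotomy first and then pin down the value $md=2^{h}-1$ for the binary case by a matching pair of bounds. For a complete $k$-ary tree $T$ of height $h\ge 1$: if $k\ge 3$, every level-$(h-1)$ vertex is adjacent to its $k\ge 3$ children, all of which are pendant, so Corollary~\ref{3pendants} gives $md(T)=\infty$; if $k=1$ then $T$ is a path and $md(T)=1$ by Theorem~\ref{path}; and $k=2$ will be settled by exhibiting an m-resolving set of size $2^{h}-1$, which in particular shows $md(T)<\infty$. Thus $md(T)$ is finite exactly when $k\in\{1,2\}$.

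For the binary value, write $T_h$ for the complete binary tree of height $h$, with root $r$ and principal subtrees $S_L,S_R\cong T_{h-1}$, and exploit $\mathrm{Aut}(T_h)\cong(\mathrm{Aut}(T_{h-1})\times\mathrm{Aut}(T_{h-1}))\rtimes\mathbb{Z}_{2}$. The key observation for the lower bound is that every m-resolving set $W$ has trivial setwise stabiliser: a non-identity $\psi$ with $\psi(W)=W$ would give $r_m(x\mid W)=r_m(\psi(x)\mid W)$, colliding any $x\neq\psi(x)$. Writing $W_L=W\cap S_L$ and $W_R=W\cap S_R$, the wreath decomposition shows that $W$ is stabiliser-trivial if and only if $W_L$ and $W_R$ are each stabiliser-trivial in $T_{h-1}$ \emph{and} lie in different $\mathrm{Aut}(T_{h-1})$-orbits (the diagonal factors force each piece to be rigid, while a subtree-swap fixes $W$ precisely when the two pieces are isomorphic). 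I would turn this into an induction with the sharpened claim that the stabiliser-trivial subsets of $T_h$ fall into exactly two orbits, of sizes $2^{h}-1$ and $2^{h}$: by induction the only stabiliser-trivial orbits of $T_{h-1}$ have sizes $2^{h-1}-1$ and $2^{h-1}$, so the unique admissible pair of distinct orbits has total size $2^{h}-1$, and adjoining or omitting $r$ produces the two asserted orbits. In particular the minimum stabiliser-trivial set has size $2^{h}-1$, whence $md(T_h)\ge 2^{h}-1$.

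For the matching upper bound I would take $W$ to be the (unique up to automorphism) stabiliser-trivial set of size $2^{h}-1$, so that $\{|W_L|,|W_R|\}=\{2^{h-1}-1,2^{h-1}\}$, and prove by induction that $W$ is m-resolving. The clean case is two vertices $u\neq v$ in the same principal subtree at the same depth: then every vertex outside that subtree is equidistant from $u$ and $v$, so an equality $r_m(u\mid W)=r_m(v\mid W)$ would force $r_m(u\mid W_L)=r_m(v\mid W_L)$, contradicting the inductive separation of same-depth pairs by $W_L$. Since for $x\in S_L$ the distance to any $w\in W_R$ equals $d(x,r)+1+d(w,\rho)$, where $\rho$ is the root of $S_R$, the block of distances from $x$ to $W_R$ is a rigid shift, by $d(x,r)+1$, of the fixed profile $\{d(w,\rho):w\in W_R\}$; I would use this shift, together with the size asymmetry $|W_L|\neq|W_R|$, to separate vertices in different subtrees or at different depths, reducing every remaining pair to the clean case. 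To keep the induction going I would carry the weaker invariant ``separates all same-depth pairs'' for \emph{both} orbits, since the larger, size-$2^{h-1}$ piece need not be fully m-resolving on its own (already in $T_1$ the set $\{\mathrm{centre},\mathrm{leaf}\}$ fails to distinguish the centre from that leaf, yet still separates the two equidistant leaves).

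The main obstacle is exactly this last separation step in the upper bound. Multiset dimension is \emph{not} monotone under enlarging the set --- adjoining a common vertex can merge two previously distinct multisets, since equal multisets may arise by adding unequal distances to distinct ones --- so one cannot simply inherit resolvability from a subset, nor is trivial stabiliser alone sufficient for resolvability in general; the potential coincidences must be excluded directly. The delicate point is that the within-subtree distances (at most $2(h-1)$) and the across-root distances (at least $d(x,r)+1$) overlap in range, so no crude threshold on the entries of $r_m(x\mid W)$ tells the two subtrees apart. I expect the resolution to come from a depth-indexed bookkeeping of the shifted $W_R$-block described above: because this block has size $|W_R|\neq|W_L|$ and is translated by an amount determined solely by $d(x,r)$, matching the full multiset across the two subtrees, or across two depths within one subtree, can be shown to force $u=v$.
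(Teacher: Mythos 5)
Your finiteness dichotomy is fine, and your lower bound is both correct and a genuinely different route from the paper's: the paper argues level by level that each of the $2^h-1$ sibling pairs (two vertices with a common parent) must contribute exactly one vertex to any m-resolving set, whereas you classify the stabiliser-trivial subsets by induction on the wreath decomposition. That induction does close (exactly two orbits, of set-sizes $2^h-1$ and $2^h$), and the minimal orbit consists precisely of the one-vertex-per-sibling-pair sets, so the two arguments certify the same bound on the same extremal family.

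The genuine gap is in the upper bound, and you have located it yourself: the separation of two vertices at the same depth lying in \emph{different} principal subtrees is never carried out --- you only announce that the shifted $W_R$-block together with $|W_L|\neq|W_R|$ ``can be shown'' to force $u=v$. This is not a routine verification. For $u\in S_L$ and $v\in S_R$ at depth $\ell$, both multisets have the same maximum entry $\ell+h$ with the same multiplicity $2^{h-2}$, so no coarse statistic decides the matter; and while the cross-root blocks are rigid shifts of fixed profiles, the within-subtree blocks $r_m(u\mid W_L)$ and $r_m(v\mid W_R)$ are \emph{not} determined by depth alone --- they depend on which representative of each sibling pair containing an ancestor of $u$ (resp.\ $v$) was chosen for $W$ --- so the proposed bookkeeping must actually confront these varying blocks. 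Worse, your induction does not close without this case: the invariant ``both orbits separate all same-depth pairs of $T_{h-1}$'' already presupposes the cross-subtree case one level down. For comparison, the paper attacks exactly this pair non-inductively: it takes the midpoint $x$ of the $u$--$v$ path, observes that exactly one child $x_1$ of $x$ on that path lies in $W$, and combines $d(u,x_1)\neq d(v,x_1)$ with the claim that $u$ and $v$ have equal representations with respect to $W\setminus\{x_1\}$. (That claim is itself asserted without proof, and in fact can fail as literally stated, so the published argument is also terse at precisely this point; but your proposal is missing the entire step rather than the justification of one identity.) Until the same-depth, different-subtree case is proved, the claimed equality $md(T)=2^h-1$ is only a lower bound in your write-up.
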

\begin{proof}
Let $T$ be a complete $k$-ary tree of height $h \ge 1$. If $k \ge 3$, then by Lemma \ref{3pendants}, $md(T) = \infty$.
If $k = 1$, then $T$ is a path and $md(T) = 1$ (by Theorem \ref{path}).

Let $k=2$. If $h = 1$, then $T$ is a path having two edges and $md(T) = 1$. So, let us study binary trees of height $h \ge 2$.
Let $W$ be any m-resolving set of $T$.
Consider the last level $h$ of $T$ containing $2^{h-1}$ pairs of pendant vertices of distance $2$. Note that exactly one pendant vertex of each pair must be in $W$ (otherwise their representations would be equal).
Now consider level $h-1$ of $T$ having $2^{h-2}$ pairs of vertices of distance $2$.
Vertices of each pair have the same representations with respect to the vertices in $W$ which are in level $h$ and they have the same distance to any other vertex of $T$. So the pair cannot be resolved by other vertices, which means that exactly one of the vertices of each pair is in $W$.
We can repeat similar arguments for next levels (levels $h-2$, $h-3$, \dots , $1$) to obtain
$2^{h-1}+2^{h-2}+ \dots +1 = 2^h-1$ vertices that must be in $W$, thus $md(T) \geq 2^h-1$.

In each level $i$ of $T$, where $1\leq i \leq h$, there are exactly $2^{i-1}$ pairs of vertices of distance $2$. Let $W$ contains exactly one vertex of each such pair. So $|W|=\sum_{i=1}^{h} 2^{i-1} = 2^h-1$.
We prove that $W$ is an m-resolving set. Let us show that any two vertices $u, v$ of $T$ are resolved by $W$.
We consider two cases.
\begin{itemize}
\item[(1)] $u$ and $v$ are in different levels, say $i$ and $j$, where $0 \le i < j \le h$:

The distance between $v$ and $2^{h-2}$ vertices of $W$ which are in level $h$ is $j+h$.
On the other hand, there is no vertex in $W$ of distance $j+h$ from $u$. Thus
$u$ and $v$ have different representations.

\item[(2)] $u$ and $v$ are in the same level $i$, where $1 \le i \le h$:

If exactly one of them is in $W$, clearly they have different representations.
If none of them is in $W$ or both of them are in $W$, then let us denote by $x$ the central vertex of the path connecting $u$ and $v$. This path has even number of edges, say $2s$, and then $x$ is in level $i - s$. We know that $x$ is adjacent to two vertices, say $x_1$, $x_2$, in level $i - s + 1$. Both, $x_1$ and $x_2$, belong to the path and exactly one of them, say $x_1$, is in $W$. Clearly $d(u, x_1) \neq d(v, x_1)$ and it can be checked that $u$ and $v$ have the same representations with respect to $W \setminus \{ x_1 \}$. Hence $W$ is an m-resolving set and
$md(T) \leq 2^h-1$. The proof is complete.
\end{itemize}
\end{proof}

We suspect that characterizing all graphs with finite multiset dimension will be difficult, however we would like to ask similar question for trees in the following.
\begin{problem}
Characterize all trees having finite multiset dimension. Give exact values of the multiset dimension of trees if it is finite.
\end{problem}

\section{Graphs with multiset dimension $3$} \label{3}

From Theorem \ref{bound1} we know that the multiset dimension of any graph other than a path is at least $3$.
Here we present two families of graphs showing that the lower bound is sharp.
The proof for the first result was given by Charles Delorme.

\begin{theorem}\cite{Delorme}
Let $n\ge 6$. The multiset dimension of the cycle $C_n$ is $3$.
\end{theorem}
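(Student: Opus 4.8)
The lower bound is free: $C_n$ is not a path, so Theorem~\ref{bound1} gives $md(C_n)\ge 3$, and it remains to exhibit an m-resolving set of size $3$. Label the vertices $0,1,\dots,n-1$ cyclically so that $d(i,j)=\min(|i-j|,\,n-|i-j|)$, and take $W=\{0,1,3\}$; the point of this choice is that the pairwise distances inside $W$ are $1,2,3$, all distinct. The goal is to prove that $x\mapsto r_m(x\,|\,W)$ is injective. I would first record that the adjacent pair $\{0,1\}$ already resolves $C_n$ in the ordinary (vector) sense: the value $d(x,0)=k$ leaves only the two candidates $x=k$ and $x=n-k$, and $d(x,1)$ separates them. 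Hence the entire difficulty is that passing to the \emph{unordered} multiset might allow the distance to $3$ to stand in for a swapped pair and so create a collision.

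The main step is a direct computation of the labelled triple $(d(x,0),d(x,1),d(x,3))$ along each of the two arcs leaving the cluster $\{0,1,3\}$. Away from the antipodal region and for $k\ge 3$, a vertex $x=k$ on the ``descending'' side has triple $(k,\,k-1,\,k-3)$, while a vertex $x=n-k$ on the other side has triple $(k,\,k+1,\,k+3)$. Two observations then settle the generic case. First, each of these families is strictly monotone in $k$, so two vertices on the same arc cannot collide. Second, the sorted multiset on the descending side is $\{k-3,\,k-1,\,k\}$, with successive gaps $(2,1)$, whereas on the other side it is $\{k',\,k'+1,\,k'+3\}$, with successive gaps $(1,2)$. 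If two such multisets coincided, equality of the smallest and of the largest entries would both force $k=k'+3$, and then equality of the middle entries would demand $k-1=k'+1$, i.e.\ $k'+2=k'+1$, which is absurd. Thus no cross-arc collision is possible either, and $W$ distinguishes all vertices lying outside a bounded band around the antipodal region.

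The step I expect to cause the most trouble is precisely that bounded band of vertices near the point(s) antipodal to the cluster, where the formula $d(x,3)=k+3$ breaks down because the shortest path to $3$ wraps around; there all three distances crowd together and the two gap patterns above can degenerate. Here I would argue by hand, treating the two parities of $n$ separately (for odd $n$ a single vertex is equidistant from $0$ and $1$ and must be checked on its own), together with the handful of cluster vertices $0,1,2,3$ themselves. In each of these finitely many configurations the distinctness of the pairwise distances $1,2,3$ of $W$ is exactly what prevents two representations from coinciding. Combining the generic monotonicity-and-gap argument with this finite boundary check shows that $W$ is an m-resolving set, so $md(C_n)\le 3$, and together with the lower bound we conclude $md(C_n)=3$.
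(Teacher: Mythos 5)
Your proposal is correct and uses the very same witness set $W=\{v_0,v_1,v_3\}$ as the paper, but the verification is organized quite differently. The paper simply tabulates the representation multiset of every vertex of $C_n$, in two separate tables for $n=2t$ and $n=2t+1$, and lets the reader confirm by inspection that all entries are distinct. You instead prove injectivity structurally: on one arc the generic multiset is $\{k-3,k-1,k\}$ with gap pattern $(2,1)$, on the other it is $\{k,k+1,k+3\}$ with gap pattern $(1,2)$, monotonicity kills same-arc collisions, and the incompatible gap patterns kill cross-arc collisions; what remains is a finite band of exceptional vertices near the antipode of the cluster (where the formula $d(x,3)$ wraps around) plus the cluster vertices $v_0,v_1,v_2,v_3$ themselves. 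Your approach buys a cleaner, more conceptual explanation of \emph{why} $\{0,1,3\}$ works (it is exactly the Sidon-type property that the three pairwise distances $1,2,3$ are distinct), and it localizes all the real work into a boundary check of bounded size independent of $n$; the paper's approach buys completeness at the cost of opacity, since the tables must be recomputed to be believed. The one thing you have not actually done is carry out that finite boundary check --- you only assert it will go through. It does: the paper's tables list precisely those exceptional representations (for $n=2t$ they are $\{t-2,t-1,t\}$, $\{t-2,t-1,t-1\}$, $\{t-3,t-2,t\}$ at $v_{t+1},v_{t+2},v_{t+3}$, and analogously for odd $n$), and each either has gap pattern $(1,1)$, or a repeated largest element, or a $0$, none of which can match a generic multiset or another exceptional one. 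So the argument is sound, but in a final write-up you should include those few explicit multisets rather than leave the crux as a promise.
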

\begin{proof}
The description of the representations of vertices depends on the parity of $n$; in both cases, we check that the set $W=\{v_0,v_1,v_3\}$ with a usual labelling of the cycle is convenient.

If $n=2t$ with $t\ge 3$, the representations of vertices are the following.
{\small
\[\begin{array}{cccc}
v_0&v_1&v_2&v_3\\
\{0,1,3\}&\{0,1,2\}&\{1,1,2\}&\{0,2,3\}\\
v_i \ (3 < i < t)&v_t&v_{t+1}&v_{t+2}\\
\{i-3,i-1,i\}&\{t-3,t-1,t\}&\{t-2,t-1,t\}&\{t-2,t-1,t-1\}\\
v_{t+3}&v_{i+t} \ (3 < i < t)\\
\{t-3,t-2,t\}&\{t-i,t-i+1,t-i+3\}\\
\end{array}\]}

If $n=2t+1$ with $t\ge 3$, the representations of vertices are given by:
{\small\[\begin{array}{cccc}
v_0&v_1&v_2&v_3\\
\{0,1,3\}&\{0,1,2\}&\{1,1,2\}&\{0,2,3\}\\
v_i \ (3 < i < t)&v_t&v_{t+1}&v_{t+2}\\
\{i-3,i-1,i\}&\{t-3,t-1,t\}&\{t-2,t,t\}&\{t-1,t-1,t\}\\
v_{t+3}&v_{t+4}&v_{i+t+1} \ (3 < i < t)\\
\{t-2,t-1,t\}&\{t-3,t-2,t\}&\{t-i,t-i+1,t-i+3\}\\
\end{array}\]}
\end{proof}

In the following, we denote by $G \Box H$ the Cartesian product of the graphs $G$ and $H$.
\begin{theorem}
Let $m \ge 3$ and $n \ge 2$. The multiset dimension of the grid graph $P_m \Box P_n$ is $3$.
\end{theorem}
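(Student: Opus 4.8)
The lower bound is immediate: for $m\ge 3$ and $n\ge 2$ the grid $P_m\Box P_n$ is not a path, so Theorem \ref{bound1} gives $md(P_m\Box P_n)\ge 3$. It therefore remains to exhibit an m-resolving set of size $3$, and since $P_m\Box P_n\cong P_n\Box P_m$ I may assume $m\ge n$. I will use the coordinates $V(P_m\Box P_n)=\{(i,j): 0\le i\le m-1,\ 0\le j\le n-1\}$, for which $d((i,j),(i',j'))=|i-i'|+|j-j'|$. The plan is to pick two reference vertices aligned along an edge, say $A=(0,0)$ and $B=(m-1,0)$ on the bottom edge, together with a third reference $C$ taken on the left edge, whose position will depend on the shape of the grid.

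The role of the aligned pair is purely arithmetic: since $d(x,A)=i+j$ and $d(x,B)=(m-1-i)+j$ for $x=(i,j)$, the \emph{unordered} pair $\{d(x,A),d(x,B)\}$ already determines $j$ from the sum $d(x,A)+d(x,B)=(m-1)+2j$, and determines $i$ only \emph{up to} the reflection $i\mapsto m-1-i$ from the difference $|d(x,A)-d(x,B)|=|m-1-2i|$. Thus $A$ and $B$ alone fail to m-resolve precisely the mirror pairs $(i,j)$ and $(m-1-i,j)$, and the sole purpose of $C$ is to break this residual reflection while not creating new ambiguities. First I would fix $C$ and, reading the representation as the ordered triple $(d(x,A),d(x,B),d(x,C))$, reconstruct $(i,j)$ from the multiset by the decoding above, after recognising which two entries of the multiset are the ``aligned'' distances $d(x,A),d(x,B)$.

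The hard part will be exactly this recognition step. Because the representation is an unordered multiset, I must rule out \emph{cross-collisions}: distinct vertices $x\ne y$ whose multisets coincide through a matching that pairs $d(x,C)$ against $d(y,A)$ or $d(y,B)$ rather than against $d(y,C)$. Equivalently, $W=\{A,B,C\}$ fails as soon as some nontrivial permutation of $(d(x,A),d(x,B),d(x,C))$ is realised as the ordered representation of another vertex. These coincidences are genuinely sensitive to the shape of the grid: for a square grid the extra diagonal automorphism $(i,j)\mapsto(j,i)$ makes symmetric choices of $C$ (such as three corners) fail, while the thin grids with $n=2$ are degenerate, so no single formula for $C$ works in all cases. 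I would therefore split into the cases $n=2$, $n\ge 3$ with $m>n$, and $m=n\ge 3$, choosing $C$ in each case so that its distance is never confusable with the aligned pair — for the square case one must push $C$ strictly into the interior of an edge to break the diagonal symmetry — and then verify injectivity of $x\mapsto\{d(x,A),d(x,B),d(x,C)\}$ by a short case analysis on the positions of $i$ and $j$, treating the finitely many smallest grids directly. Controlling these cross-collisions uniformly across the three regimes is the crux of the argument; everything else is the routine $L^1$ decoding described above.
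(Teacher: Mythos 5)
Your lower bound is fine, and your diagnosis of where the difficulty lies is accurate, but the proposal stops exactly at the point where the proof has to be done. You never specify the third vertex $C$ in any of your three regimes, and you never carry out the injectivity check for $x\mapsto\{d(x,A),d(x,B),d(x,C)\}$; you yourself label the recognition of the two ``aligned'' entries and the exclusion of cross-collisions as ``the crux'' and then leave it unexecuted. As it stands this is a plan for a proof, not a proof: the content of the theorem is precisely the existence of a concrete $3$-set that survives the multiset identifications, and that existence is what is missing.

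This gap is not a formality, because the collision phenomenon you flag is real and easy to fall into. The paper's own argument takes a different route --- it clusters all three reference vertices near one corner, $W=\{v_{1,1},v_{1,2},v_{3,1}\}$, and tabulates the representations explicitly --- and even there the generic representation $r_m(v_{i,j}|W)=\{i+j-4,\,i+j-3,\,i+j-2\}$ for $3\le i\le m$, $2\le j\le n$ depends only on $i+j$, so distinct vertices on a common antidiagonal (for instance $v_{3,3}$ and $v_{4,2}$ in $P_4\Box P_3$) receive identical multisets. A casually chosen triple, including ones that ``look'' asymmetric, can therefore fail, which means the verification you defer cannot be treated as routine. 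To complete your approach you must actually exhibit $C$ in each regime (thin grids $n=2$, rectangular $m>n\ge 3$, square $m=n\ge 3$), show that within the multiset the entry $d(x,C)$ cannot be confused with the aligned pair (or that any mis-identification still forces $x=y$), and check the finitely many small grids by hand; none of that is present in the proposal.
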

\begin{proof}
We can write $V(P_m \Box P_n) = \{ v_{i,j} \ | \ i = 1, 2, \dots, m, \ j = 1, 2, \dots, n \}$.
Then $E(P_m \Box P_n) = \{ v_{i,j}v_{i,j+1} \ | \ i = 1, 2, \dots, m, \ j = 1, 2, \dots, n-1 \} \cup
\{ v_{i,j}v_{i+1,j} \ | \ i = 1, 2, \dots, m-1, \ j = 1, 2, \dots, n \}$.

Let us show that $W = \{v_{1,1},v_{1,2},v_{3,1} \}$ is a m-resolving set of
the graph $P_m \Box P_n$. We present representations of vertices with respect to $W$ as follows
\begin{eqnarray*}
r_m (v_{1,1} | W) & = & \{ 0, 1, 2 \},\\
r_m (v_{2,1} | W) & = & \{ 1, 1, 2 \},\\
r_m (v_{i,1} | W) & = & \{ i-3, i-1, i \} \ \text{for} \ 3 \le i \le m,\\
r_m (v_{1,j} | W) & = & \{ j-2, j-1, j+1 \} \ \text{for} \ 2 \le j \le n,\\
r_m (v_{2,j} | W) & = & \{ j-1, j, j \} \ \text{for} \ 2 \le j \le n,\\
r_m (v_{i,j} | W) & = & \{ i+j-4, i+j-3, i+j-2 \} \ \text{for} \ 3 \le i \le m, \ 2 \le j \le n.
\end{eqnarray*}
Since no two vertices have the same representations, $W$ is an m-resolving set
and hence $md(P_m \Box P_n) = 3$.
\end{proof}

We conclude this section by asking the following.

\begin{problem}
Characterize all graphs with multiset dimension 3.
\end{problem}

\section{Multiset dimension for Cayley graphs}

Since the multiset dimension is a new invariant, there is a very large space for research in this area. We have stated a few interesting problems and conjecture in the previous sections and we would like to add an additional problem as stated below.

\begin{problem}
Study the multiset dimension for Cayley graphs of cyclic, Abelian, and non-Abelian groups.
\end{problem}

\section*{Acknowledgments}

This research was initiated as a problem for the second author's thesis and continued when the third author visited the Institut Teknologi Bandung with funding from the World Class University Grant 2017. The first author was supported by Penelitian Dasar Unggulan Perguruan Tinggi 2017-2019, funded by Indonesian Ministry of Research, Technology and Higher Education.

\bigskip

\noindent Rinovia Simanjuntak

\noindent rino@math.itb.ac.id

\bigskip

\noindent Institut Teknologi Bandung

\noindent Combinatorial Mathematics Research Group

\noindent Jl Ganesa 10, Bandung 40132, Indonesia

\bigskip

\noindent Presli Siagian

\noindent presdi453@gmail.com

\bigskip

\noindent Institut Teknologi Bandung

\noindent Master Program in Mathematics

\noindent Jl Ganesa 10, Bandung 40132, Indonesia

\bigskip

\noindent Tom\'{a}\v{s} Vetr\'{i}k

\noindent vetrikt@ufs.ac.za

\bigskip

\noindent University of the Free State

\noindent Department of Mathematics and Applied Mathematics

\noindent P. O. Box 339, 9300 Bloemfontein, South Africa

\end{document}